\newfont {\cyr} {wncyr10}
\renewcommand{\labelenumi}{{(\roman{enumi})}}
\newtheorem{theorem}{Theorem}[section]
\newtheorem{lemma}[theorem]{Lemma}
\newtheorem{proposition}[theorem]{Proposition}
\newtheorem{hypothesis}[theorem]{Hypothesis}
\newcounter{claim}[theorem]
\newcounter{cclaim}[theorem]
\def \udot {{}^{\textstyle .}}
\newcommand{\F}{\mathrm{F}}\newcommand{\B}{\mathrm{B}}\newcommand{\M}{\mathcal{M}}
\newcommand{\Aut}{\mathrm{Aut}}
\newcommand{\Syl}{\mathrm{Syl}}\newcommand{\syl}{\mathrm{Syl}}
\newcommand{\GF}{\mathrm{GF}}
\newcommand{\SL}{\mathrm{SL}}
\newcommand{\PGL}{\mathrm{PGL}}
\newcommand{\PSL}{\mathrm{PSL}}\newcommand{\PSp}{\mathrm{PSp}}
\newcommand{\Sym}{\mathrm{Sym}}
\newcommand{\Alt}{\mathrm{Alt}}
\newcommand{\Dih}{\mathrm{Dih}}
\def \syl {\hbox {\rm Syl}}\def \Syl {\hbox {\rm Syl}}
\def \ov {\overline}
\def \Aut{ \mathrm {Aut}}
\def \Fi {\mbox {\rm Fi}}
\def \J{\mbox {\rm J}}
\def \B{\mbox {\rm B}}
\def \M{\mbox {\rm M}}
\def \McL{\mbox {\rm McL}}
\def \PSU {\mbox {\rm PSU}}
\begin{document}
\renewcommand{\labelenumi}{(\roman{enumi})}

\title  {On strongly $p$-embedded subgroups of Lie rank $2$}
 \author{Chris Parker}
  \author{Gernot Stroth}

\address{Chris Parker\\
School of Mathematics\\
University of Birmingham\\
Edgbaston\\
Birmingham B15 2TT\\
United Kingdom} \email{c.w.parker@bham.ac.uk}

\address{Gernot Stroth\\
Institut f\"ur Mathematik\\ Universit\"at Halle - Wittenberg\\
Theordor Lieser Str. 5\\ 06099 Halle\\ Germany}
\email{gernot.stroth@mathematik.uni-halle.de}

\email {}

\def \eps {\epsilon}
\def\diag {\mathrm {diag}}
\def\SO {\mathrm{SO}}\def\GO {\mathrm{GO}}
\date{\today}

\maketitle \pagestyle{myheadings}

\markright{{\sc }} \markleft{{\sc Chris Parker and Gernot Stroth}}

\section{Introduction}

For a prime $p$, a proper subgroup $H$ of the finite group $G$ is  \emph{strongly $p$-embedded in
$G$} if $p$ divides $|H|$  and $p$ does not divide $|H \cap H^g|$ for all $g \in G\setminus H$. A
characteristic  property  of strongly $p$-embedded subgroups is that  $N_G(X)\le H$ for any
non-trivial $p$-subgroup $X$ of $H$. Strongly $p$-embedded subgroups appear in the final stages of
one of the programmes to better understand the classification of the finite simple groups \cite
{MSS}. The almost simple groups  with strongly $2$-embedded subgroups were determined by Bender
\cite{Bender} and Suzuki \cite{Suzuki}.

Recall that a \emph{$\mathcal K$-group} is a group in which every composition factor is from the
list of ``known" simple groups. That is, every simple section is either a cyclic group of prime
order, an alternating group, a group of Lie type or one of the twenty six sporadic simple groups.
In the classification of groups of local characteristic $p$, certain normalizers of non-trivial
subgroups are assumed to be $\mathcal K$-groups.

The objective of this paper is to extend the results of \cite[Corollary 1.4]{ParkerStroth} to cover
some of the Lie type groups of Lie rank 2 defined in characteristic $p$. Further results related to
local characteristic $p$ identifications of rank $2$ Lie type groups can be found in the work of
Parker and Rowley \cite{WBN}. Our main result in this article is the following theorem.

\begin{theorem}\label{thma}
Suppose that $p$ is an odd prime, $G$ is a finite group and that $N_G(T)$ is a $\mathcal K$-group
for all non-trivial $2$-subgroups $T$ of $G$. If $H$ is a  strongly $p$-embedded subgroup of $G$,
then $F^*(H)$ is not a simple Lie type group of Lie rank $2$ defined in characteristic $p$ unless
perhaps $F^*(H) \cong \PSL_3(p)$.
\end{theorem}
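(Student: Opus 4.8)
The plan is to argue by contradiction. Assume $L := F^*(H)$ is a simple Lie type group of Lie rank $2$ in characteristic $p$ with $L \not\cong \PSL_3(p)$. Since $p$ is odd, the Ree groups ${}^2\mathrm{F}_4(q)$ (characteristic $2$) are excluded and the candidates are
$$L \in \{\, \PSL_3(q),\ \PSp_4(q),\ \PSU_4(q),\ \PSU_5(q),\ \G_2(q),\ {}^3\mathrm{D}_4(q)\,\},\qquad q=p^a .$$
First I would record the standard consequences of strong $p$-embedding: fixing $S\in\Syl_p(H)$ we get $S\in\Syl_p(G)$, and $N_G(X)\le H$ for every non-trivial $p$-subgroup $X\le S$; in particular $C_G(x)\le H$ for every non-trivial $p$-element $x$ of $H$. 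I would then fix a Borel subgroup of $L$, its two maximal parabolics, the long root subgroups, and the action of a maximal torus, so that the full $p$-local geometry of $L$ inside $H$ is available.

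The key reduction, and the only place the hypothesis on $2$-subgroups enters, is a passage to involution centralizers. If $t$ is an involution of $L$ with $p\mid|C_H(t)|$, then $C_H(t)$ is strongly $p$-embedded in $C_G(t)$ (or the two coincide): since $C_H(t)=H\cap C_G(t)$, any $g\in C_G(t)\setminus C_H(t)$ lies in $G\setminus H$, so $C_H(t)\cap C_H(t)^g\le H\cap H^g$ has order prime to $p$. Now $C_G(t)=N_G(\langle t\rangle)$ is a $\mathcal K$-group by hypothesis, so its components are known simple groups and may be analysed. As $p$ is odd and $L$ has characteristic $p$, the centralizer $C_L(t)=C_{F^*(H)}(t)$ is, modulo its $p'$-core, a central product of characteristic $p$ Lie type groups of strictly smaller Lie rank together with a torus; in particular $p\mid|C_L(t)|\le|C_H(t)|$ for the involutions of positive semisimple rank that I will use.

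I would then choose $t$ to exploit this. For $\PSp_4(q)$, $\G_2(q)$, ${}^3\mathrm{D}_4(q)$, $\PSU_4(q)$ and $\PSU_5(q)$ there is an involution whose centralizer in $L$ is a central product of \emph{two} rank $1$ Lie type groups (for instance $\SL_2(q)\circ\SL_2(q)$ in $\G_2(q)$, or $\SL_2(q)\times\SU_3(q)$-type in $\PSU_5(q)$). For such $t$ the group $C_G(t)$ would admit the strongly $p$-embedded subgroup $C_H(t)$ while $F^*(C_G(t))$ contains a central product of two $p$-divisible components; the fusion forced by strong $p$-embedding (two commuting non-trivial $p$-subgroups coming from the two factors) cannot be confined to a single $C_H(t)$, and one reaches a contradiction directly, or via \cite[Corollary 1.4]{ParkerStroth}. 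This disposes of the five families above. The remaining family $\PSL_3(q)$ is the genuinely single-component case: here the only involution type has $C_L(t)\cong\GL_2(q)$, whose relevant section is the rank $1$ group $\PSL_2(q)$, and one applies the rank $1$ classification of \cite[Corollary 1.4]{ParkerStroth} to the pair $C_H(t)\le C_G(t)$ to constrain $C_G(t)$ and then to recover the global structure of $G$.

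The main obstacle is this last, single-component case together with the global recognition step. Once $C_G(t)$ is pinned down, one must pass from this $2$-local datum back to a contradiction for $G$ itself, comparing the forced involution centralizer with the one visible inside $H$ and using that \emph{all} $p$-fusion is controlled in $H$. This elimination succeeds for $\PSL_3(q)$ when $q>p$, but degenerates precisely at $q=p$: there the long root subgroups have order $p$, the parabolics are ``thin'', and $C_L(t)\cong\GL_2(p)$ is too small for the rank $1$ analysis to force enough structure on $C_G(t)$ to reconstruct and exclude $G$. This is exactly why $\PSL_3(p)$ must be left open, and I expect the bulk of the technical work to lie in making the single-component recognition sharp enough to capture every case other than $q=p$.
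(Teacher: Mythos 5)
Your opening reduction to the two cases $\PSp_4(p)$ and $\PSL_3(p)$ is essentially the right first move, and it is how the paper proceeds: by \cite[Proposition 9.1]{ParkerStroth}, every rank-$2$ characteristic-$p$ group other than these two has the property that \emph{every} involution of $H$ has centralizer of $p$-rank at least $2$, and then \cite[Theorem 1.2]{ParkerStroth} applies to $G$ itself --- no centralizer-level fusion argument is needed for those families. But your proposal goes wrong exactly where the paper's real work lies: you dispose of the ``two-component'' cases, including $\PSp_4(p)$, with the claim that two commuting $p$-divisible factors in $C_G(t)$ ``cannot be confined to a single $C_H(t)$,'' and this is not an argument. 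First, to say anything about $C_G(t)$ you must know $C_G(t)\not\le H$ for that particular class of involutions. Strong $p$-embedding gives nothing of the sort: after reducing to $G$ simple, the absence of a strongly $2$-embedded subgroup \cite[17.13]{GLS2} yields only \emph{some} involution $r$ with $C_G(r)\not\le H$, and in $\PSp_4(p)$ this could a priori be the non-$2$-central class $s$, whose centralizer has $p$-rank $1$ (it is $\Omega_3(p)$ extended by a dihedral group, Lemma~\ref{O5facts}(iv)) and hence escapes all of the rank-$\ge 2$ machinery. The paper's key result, Lemma~\ref{Ctnotin}, exists precisely to convert this into $C_G(t)\not\le H$ for the $2$-central involution $t$, and its proof is a long $2$-local analysis: Thompson transfer (Lemma~\ref{TTLG}), the Gorenstein--Walter classification of groups with dihedral Sylow $2$-subgroups, and Dickson's list of subgroups of $\PSL_2(r^a)$, with genuine near-misses at $p=3,5,7$ that must be excluded one by one.

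Second, even granted $C_G(t)\not\le H$, there is no ``direct'' contradiction: what one gets is that $C_H(t)/O_{p'}$ is strongly $p$-embedded in the $\mathcal K$-group $C_G(t)/O_{p'}$, hence $(F^*(C_G(t)/O_{p'}(C_G(t))),p)$ lies in the list $\mathcal E$ of Proposition~\ref{SE-p2}, and this list contains candidates genuinely compatible with $F^*(C_H(t))\cong \SL_2(p)*\SL_2(p)$. Concretely, for $p=5$ the configuration $F^*(C_G(t))\cong 2\udot\Alt(10)$ really does contain $\SL_2(5)*\SL_2(5)$ with the correct $p$-rank, and the paper eliminates it only by comparing $N_{F^*(C_G(t))}(X)/X$ (cyclic of order $4$) with $N_H(X)/X$ (of order $2$ or a fours group), where $X=F^*(C_H(t))$. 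So your placement of the difficulty is inverted: $\PSL_3(q)$ for $q>p$, and indeed all families over $q=p^a$ with $a\ge 2$, fall to the earlier paper because all involution centralizers there have $p$-rank $\ge 2$, whereas the genuinely hard case --- the entire content of this paper --- is $\PSp_4(p)$, one of the cases your sketch treats as routine. Your instinct that $\PSL_3(p)$ must be left open because its involution centralizers have $p$-rank $1$ is correct and matches the paper's introduction, but $\PSp_4(p)$ shares that defect for one of its two involution classes, which is exactly why it cannot be handled by any soft fusion argument and required the analysis of Section 3.
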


Suppose that $G$ and $H$ are as in Theorem~\ref{thma}. Then the main theorems in
\cite{ParkerStroth} can be applied when the $p$-rank of $C_H(t)$, $m_p(C_H(t))$, is at least $2$
for every involution $t$ in $H$. For groups $H$ with $F^*(H)$ a simple Lie type group defined in
characteristic $p$ and Lie rank $2$, we use \cite[Proposition 9.1]{ParkerStroth} to see that if
$m_p(C_H(t))\le 1$ for some involution in $H$, then $F^*(H) \cong \PSp_4(p)$ or $\PSL_3(p)$. Thus,
because of \cite[Theorem 1.2]{ParkerStroth}, to prove Theorem~\ref{thma}, we  now have to prove the
following theorem.

\begin{theorem}\label{thmone} Suppose that $p$ is an odd prime, $G$ is a finite group, $N_G(T)$ is a $\mathcal K$-group
for all non-trivial $2$-subgroups $T$ of $G$ and $H \le G$. If $H$ is strongly $p$-embedded in $G$,
then $F^*(H) \not \cong \PSp_4(p)$.
\end{theorem}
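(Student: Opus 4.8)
The plan is to derive a contradiction from the assumption that $H$ is strongly $p$-embedded in $G$ with $F^*(H) \cong \PSp_4(p)$, by exploiting the structure of involution centralizers in $G$. The key tension is this: strong $p$-embedding forces $N_G(X) \le H$ for every non-trivial $p$-subgroup $X$ of $H$, so $H$ controls all $p$-local structure; on the other hand, the hypothesis that $N_G(T)$ is a $\mathcal K$-group for every non-trivial $2$-subgroup $T$ gives us firm control over the $2$-local structure of $G$. The strategy is to pick a suitable involution $t \in H$, analyse $C_G(t)$ using the $\mathcal K$-group hypothesis, and show that the resulting $2$-local structure is incompatible with the requirement that $p$-elements of $C_G(t)$ all lie inside $H$.

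\emph{First I would} pin down the relevant involution centralizers inside $H \cong \PSp_4(p)$ (modulo centre). Since $p$ is odd, $\PSp_4(p)$ has a small number of conjugacy classes of involutions, and their centralizers in $H$ are well understood: roughly, a central-type involution has centralizer with a component or large $p$-local part, while other involutions have centralizers of smaller $p$-rank. The point emphasised in the surrounding text is precisely that $\PSp_4(p)$ (and $\PSL_3(p)$) are the troublesome cases where $m_p(C_H(t)) \le 1$ for some involution $t$, so the main results of \cite{ParkerStroth} do not directly apply. Thus I would locate an involution $t \in H$ with $m_p(C_H(t)) \le 1$, because for such $t$ the $p$-structure of $C_G(t)$ is tightly constrained and the strongly $p$-embedded hypothesis becomes easiest to turn against us.

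\emph{Next I would} compare $C_H(t)$ with $C_G(t)$. Strong $p$-embedding gives $C_G(x) \le H$ for every non-trivial $p$-element $x$, but it does \emph{not} immediately give $C_G(t) \le H$ for an involution $t$; the containment $N_G(S) \le H$ holds only for $p$-subgroups $S$. So the crux is to understand how $C_G(t)$ can be strictly larger than $C_H(t)$. Using the $\mathcal K$-group hypothesis on $N_G(\langle t\rangle)=C_G(t)$, I would invoke the classification of $\mathcal K$-groups with a prescribed involution centralizer to identify the possibilities for $F^*(C_G(t))$, and then show that a Sylow $p$-subgroup $P$ of $C_H(t)$ must be contained in, and be compatible with, this global structure. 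The strongly $p$-embedded condition forces $N_{C_G(t)}(P) \le H$, which severely restricts how $C_G(t)$ can fail to normalise the $p$-local data coming from $H$; I would push this until either $C_G(t) \le H$ or the fusion of $p$-elements violates the defining property $p \nmid |H \cap H^g|$ for $g \notin H$.

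\emph{The hard part will be} the fusion-theoretic bookkeeping when $m_p(C_H(t)) = 1$: a cyclic Sylow $p$-subgroup of $C_H(t)$ means the $p$-local information carried by $t$ is minimal, so controlling how $t$ fuses in $G$ and how $C_G(t)$ sits relative to $H$ requires a delicate transfer or fusion argument (likely a Thompson-order or $Z^*$-type argument, or an appeal to Bender--Suzuki-style analysis of the $2$-structure) rather than a direct count. I expect the decisive step to be showing that the existence of a $g \in G \setminus H$ with $t^g$ still $p$-central-compatible forces a non-trivial $p$-element in $H \cap H^g$, contradicting strong $p$-embedding; establishing this will rely on the explicit module/action structure of $\PSp_4(p)$ on its natural and unipotent radical modules, together with the $\mathcal K$-group constraints to rule out exotic global configurations. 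Once that contradiction is in hand, no finite group $G$ can have a strongly $p$-embedded $H$ with $F^*(H) \cong \PSp_4(p)$, which is the assertion of the theorem.
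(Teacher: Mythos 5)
There is a genuine gap, and it lies in your choice of involution: you propose to work with an involution whose centralizer has $m_p(C_H(t)) \le 1$, on the grounds that its centralizer is ``tightly constrained'' and easiest to turn against the strongly $p$-embedded hypothesis. This is exactly backwards. If $m_p(C_H(t)) \le 1$, then Sylow $p$-subgroups of $C_G(t)$ are cyclic of order $p$ (Lemma~\ref{main1}~(ii) applied to $K = C_G(t)$), and a group with a non-normal Sylow $p$-subgroup of order $p$ \emph{always} has a strongly $p$-embedded subgroup (the normalizer of that Sylow subgroup); so at such an involution the strongly $p$-embedded condition carries essentially no classification-theoretic force, and there is no list of possibilities for $F^*(C_G(t))$ to invoke. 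The classification that actually does the work (Proposition~\ref{SE-p2}, producing the list $\mathcal E$) requires $p$-rank at least $2$. The paper therefore builds the proof around the \emph{other} class: the $2$-central involution $t$ with $m_p(C_H(t)) = 2$, whose centralizer in $H$ involves $\SL_2(p)*\SL_2(p)$. The decisive step (Lemma~\ref{Ctnotin}) is to show $C_G(t) \not\le H$; then $C_H(t)$ is strongly $p$-embedded in the $\mathcal K$-group $C_G(t)$ with $m_p \ge 2$, so $F^*(C_G(t)/O_{p'}(C_G(t)))$ must lie in $\mathcal E$, and each candidate is incompatible with the known structure of $C_H(t)$ (for instance $p \ge 5$ forces $p=5$ and $F^*(C_G(t)) \cong 2\udot\Alt(10)$, whose normalizer of $X = \SL_2(5)*\SL_2(5)$ has $N(X)/X$ cyclic of order $4$, impossible in $H$). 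The contradiction is this structural mismatch, not --- as you predict --- a $p$-element forced into $H \cap H^g$.

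The rank-one involution $s$ does enter the paper, but only inside the reductio proving $C_G(t) \not\le H$: assuming $C_G(t) \le H$, the Bender--Suzuki theorem (the minimal counterexample $G$ is simple and not a $\mathcal K$-group, hence has no strongly $2$-embedded subgroup) forces $C_G(s) \not\le H$, and then the specific shape of $C_H(s)$ --- a normal $L \cong \PSL_2(p)$ extended by a dihedral group --- is played against Thompson transfer (Lemma~\ref{TTLG}) and the Gorenstein--Walter classification of groups with dihedral Sylow $2$-subgroups, pinning $C_G(s)$ modulo core down to $\Alt(7)$ or $\PSL_2(r^a)$, each of which is then eliminated. Crucially, this analysis uses the assumption $C_G(t) \le H$ throughout (e.g.\ to conclude $C_U(e) = C_U(es) \le C_G(es) \le H$ because $es$ is conjugate to $t$), so it cannot be run stand-alone as your plan requires. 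Your instincts about the tools (Thompson transfer, Bender--Suzuki, $2$-local analysis) are sound, but without the dichotomy between the two involution classes --- the rank-$2$ class where the list $\mathcal E$ applies once its centralizer escapes $H$, and the rank-$1$ class which is tractable only under the auxiliary hypothesis $C_G(t) \le H$ --- the proposal does not close.
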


The methods that we apply in this paper cannot be easily extended  to eliminate the possibility
that $F^*(H)\cong \PSL_3(p)$. This is because in $\PSp_4(p)$ there are involutions which have
centralizer of $p$ rank $2$ whereas in $\PSL_3(p)$ there are no such involutions. This means that
it is much more difficult to control the structure of the centralizer of an involution of $G$ in
the latter case. Indeed so far we have been unable to eliminate this configuration, though the
classification of the finite simple groups shows that it cannot occur.

In Section 2 we present the facts about $\PSp_4(p) \cong \Omega_5(p)$ and background results  from
\cite{ParkerStroth} about strongly $p$-embedded subgroups. We prove  Theorem~\ref{thmone} in
Section 3. Our notation is standard, as can be found in any of \cite{Aschbacher, Gorenstein,
huppert}. It is also consistent with that in \cite{ParkerStroth}.

\section{Preliminaries}

In this section we present facts about $\PSp_4(p) \cong \Omega_5(p)$ and present background lemmas
which we require for the proof of Theorem~\ref{thmone}.

\begin{lemma}\label{O5facts} Suppose that $(V,Q)$ is a $5$-dimensional non-degenerate orthogonal space over  $\GF(p)$ where $p$ is  a prime. Let  $X =  \SO(V,Q) \cong \SO_5(p)$ and $Y \cong \Omega(V,Q)
 \cong \Omega_5(p)$.
Assume $\beta =\{e_1,e_2,e_3,e_4,e_5\}$ is an orthonormal basis for $(V,Q)$ and set $\eps =
(-1)^{(p-1)/2}$.
\begin{enumerate}
\item $\Aut(Y) \cong X$.
\item $Y$ has exactly two conjugacy classes of involutions, one has representative
$b=\diag(-1,-1,1,1,1)$ with respect to $\beta$ and the other has representative $a= \diag
(1,-1,-1,-1,-1)$ with respect to $\beta$.
\item As quadratic spaces, $[V,b]$ has $\eps$-type  and $[V,a]$ has $+$-type.
\item $C_Y(b)$ has a normal subgroup $L$ isomorphic to $\Omega_3(p)$ such that $C_Y(b)/L \cong
\mathrm O_2^\eps(p) \cong \Dih (2(p-\eps))$.
\item $a$ is $2$-central, $O^{2}(C_X(a)) \cong \Omega_4^+(p)$, $C_X(a)/O^{2}(C_X(a))$ is a fours
group and $O_{p'}(C_X(t))$ is a $2$-group.
\item $C_Y(b)$ contains exactly four conjugacy classes of fours groups containing $b$. They have representatives $F_1 =
\langle b, \diag (1,-1,-1,1,1) \rangle$ which contains three $Y$-conjugates of $b$, $F_2 = \langle
b, a \rangle$ which contains  $b$ and two $Y$-conjugates of $a$, $F_3 = \langle b, \diag
(1,1,-1,-1,1) \rangle$ which contains two $Y$-conjugates of $b$ and a $Y$-conjugate of $a$
and, finally, \begin{eqnarray*} F_4 &=& \langle b, r_vr_w\mid v \in [V,a], w \in C_W(a), Q(v)=Q(w)\\
&&\;\;\;\;\;\;\;\;\;\;\;\;\;\mbox{ a non-square and } r_x \mbox{ a reflection for }x\in \{v,w\}
\rangle\end{eqnarray*} which contains three $Y$-conjugates of $b$.

\end{enumerate}
\end{lemma}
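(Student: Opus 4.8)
The plan is to work concretely inside the orthogonal space $(V,Q) \cong \Omega_5(p)$, using the orthonormal basis $\beta$ and the explicit involutions $a$ and $b$ from parts (ii) and (iii). Since we want conjugacy classes of fours groups containing $b$, I would first compute $C_Y(b)$ explicitly using part (iv): it has a normal subgroup $L \cong \Omega_3(p)$ with $C_Y(b)/L \cong \mathrm{O}_2^\eps(p) \cong \Dih(2(p-\eps))$. A fours group containing $b$ is determined by the choice of the second involution $c$ commuting with $b$, modulo the action of $C_Y(b)$; so the problem reduces to classifying the involutions in $C_Y(b)$ up to $C_Y(b)$-conjugacy, and then grouping those that generate the same fours group with $b$.

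The key geometric step is to track, for each involution $c \in C_Y(b)$, the isometry type and dimension of the negated space $[V,c]$, since this is a $Y$-conjugacy invariant (part (iii) tells us $b$-type involutions negate an $\eps$-type $2$-space and $a$-type involutions negate a $+$-type $4$-space). First I would observe that any $c$ commuting with $b$ must stabilize the decomposition $V = [V,b] \perp C_V(b)$, where $[V,b]$ is the $2$-dimensional $\eps$-type space $\langle e_1,e_2\rangle$ and $C_V(b) = \langle e_3,e_4,e_5\rangle$. Thus $c$ acts as an involution (or identity) on each factor, and I would enumerate the possibilities: the action on the $2$-space $[V,b]$ can be trivial, the full $-1$ (giving $b$ itself), or a reflection, while the action on the $3$-space $C_V(b)$ can negate a non-degenerate subspace of dimension $0,1,2,3$ subject to $c \in Y = \Omega_5(p)$ (the spinor norm/determinant-one condition). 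Each admissible combination yields a candidate $c$, and the pair $\{c, bc\}$ together with the invariant $[V,\cdot]$-type then pins down which of $b$, $a$, or their conjugates the three involutions $c$, $bc$, $b$ of the fours group fall into.

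The explicit representatives in the statement are then read off from this analysis. For $F_1 = \langle b, \diag(1,-1,-1,1,1)\rangle$, the second generator negates $\langle e_2, e_3 \rangle$ and its product with $b$ negates $\langle e_1, e_3\rangle$, so all three non-identity elements negate an $\eps$-type $2$-space and are $Y$-conjugate to $b$; for $F_3$ the second generator $\diag(1,1,-1,-1,1)$ negates the $+$-type $2$-space $\langle e_3, e_4 \rangle$ (hence is $a$-type on a $4$-space after accounting for... — here I would verify the type via part (iii) carefully), and so on. The subtle case is $F_4$, whose second generator is built from a product of two reflections $r_v r_w$ with $Q(v) = Q(w)$ a non-square, $v \in [V,a]$ and $w \in C_W(a)$: I would check that this element lies in $\Omega_5(p)$ (the spinor norm of $r_v r_w$ being trivial precisely because $Q(v)Q(w)$ is a square) and that it negates an $\eps$-type $2$-space, making it $b$-type.

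The main obstacle I anticipate is the bookkeeping required to show the list of four classes is both \emph{complete} and \emph{irredundant}. Completeness requires confirming that every involution-plus-$b$ configuration falls into one of the four patterns of ($Y$-types of the three involutions), which amounts to checking the fusion of involutions in $C_Y(b)$ under $C_Y(b)$ itself rather than under all of $Y$ — and here the dihedral quotient $\mathrm{O}_2^\eps(p)$ can fuse candidates that look distinct, or fail to fuse candidates that are $Y$-conjugate but not $C_Y(b)$-conjugate. The genuinely delicate point is thus distinguishing fours groups that contain the \emph{same} multiset of $Y$-classes among their involutions but are nonetheless non-conjugate in $C_Y(b)$ (or proving no such coincidence occurs here); I expect this to require a direct count, comparing the number of conjugates of each representative under $C_Y(b)$ against the total number of fours groups of each type, using the orders coming from part (iv). The parity constant $\eps = (-1)^{(p-1)/2}$ will need to be carried through, since the isometry type of $[V,b]$ and the structure of the dihedral quotient both depend on it.
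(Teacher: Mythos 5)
Your route for part (vi) is essentially the paper's: the paper's entire proof of (vi) is one sentence asserting that the classes are obtained ``by considering the action of the fours groups on $[V,b]$'', and your outline supplies precisely the details that sentence suppresses --- reduce to $C_Y(b)$-orbits on pairs $\{c,bc\}$ of involutions centralizing $b$, decompose $V=[V,b]\perp C_V(b)$, enumerate the admissible actions subject to determinant one and trivial spinor norm, and separate classes by $C_Y(b)$-invariants. In particular you correctly isolate the one genuinely delicate point: $F_1$ and $F_4$ contain the same multiset of $Y$-classes (three conjugates of $b$) and are distinguished only by whether the negated vectors have square or non-square norm, which is also exactly why the $F_4$ generator $r_vr_w$ lies in $\Omega_5(p)$ (its spinor norm $Q(v)Q(w)$ is a square). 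So for (vi) the proposal is correct and matches the paper, with more detail than the paper gives.

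Two caveats. First, your plan proves only part (vi) and takes (i)--(v) as inputs, but the lemma asserts all six parts, so a complete write-up must still dispose of them; the paper does (i) and (ii) by citation to standard sources, (iii) by computing $[V,b]=\langle e_1,e_2\rangle$, $[V,a]=\langle e_2,e_3,e_4,e_5\rangle$ and applying the type formula, and (iv), (v) by observing that the centralizer of $b$ in $\mathrm O_5(p)$ stabilizes $[V,b]\perp C_V(b)$, hence is $\mathrm O_2^\eps(p)\times\mathrm O_3(p)$, and then descending to $Y$ and $X$ via the determinant and spinor norm conditions. Using these facts in (vi) is legitimate (the paper's proof is ordered the same way), but they cannot simply be omitted. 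Second, two slips to repair in execution: $\langle e_3,e_4\rangle$ has square discriminant and is therefore of $\eps$-type, not $+$-type, so the second generator $\diag(1,1,-1,-1,1)$ of $F_3$ is itself $Y$-conjugate to $b$, and it is its product with $b$, namely $\diag(-1,-1,-1,-1,1)$, negating the $+$-type $4$-space $\langle e_1,e_2,e_3,e_4\rangle$, that accounts for the $a$-conjugate in $F_3$; and acting as $-1$ on $[V,b]$ does not force $c=b$ (consider $\diag(-1,-1,-1,-1,1)$ again), though this costs nothing for completeness since $\langle b,c\rangle=\langle b,bc\rangle$ returns you to a case already enumerated.
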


\begin{proof} Parts (i) and (ii) are well known and can be read from \cite[Theorem 2.5.12, Table
4.5.1]{GLS3}. We have that $[V,b]=\langle e_1,e_2\rangle$ and  $[V,a]=\langle
e_2,e_3,e_4,e_5\rangle$. The type of these spaces is $\eps = (-1)^{m(p-1)/2}$ where $m$ is the
dimension of the space. This gives (iii). In $Z= \mathrm O_5(p)$, $C_Z(b)$  stabilizes  $[V,b]$ and
is thus isomorphic to $\mathrm O_2^\eps(p) \times \mathrm O_3(p)$. Note that $X$ has index $2$ in
$Z$ and $Y$ has index $4$ in $Z$ with $Z/Y$ elementary abelian. We obtain $C_Y(b)$ by taking the
subgroup of $C_Z(b)$ which contains $\Omega_2^\eps(p) \times \Omega_3(p)$ together with the
elements of $C_Z(b)$ which have determinant and spinor norm both $-1$ when projected to both
$\mathrm O ^\eps_2(p)$ and to $\mathrm O_3(p)$. So (iv) holds. Part (v) follows in the same way as
(iv).

It is straight forward to calculate all the $C_Y(b)$-classes fours groups as indicated  in (vi) by
considering the action of the fours groups on $[V,b]$.
\end{proof}

We will need the following consequence of the Thompson Transfer Lemma \cite[15.16]{GLS2}.

\begin{lemma}\label{TTLG} Suppose that $G$ is a group, $L= O^2(L) \le G$, $T \in \Syl_2(L)$ and $S \in \Syl_2(G)$
with $S\ge T$. Assume that there exists a dihedral subgroup $D \le S$ such that $D\cap T=1 $, $TD =
S$ and $C \le D$ is cyclic of index $2$ in $D$. If the involutions of $D\setminus C$ are not
$G$-conjugate to any involution in $TC$ and the involution in $C$ is not $G$-conjugate to any
involution in $T$, then $G$ has a normal subgroup $K$ such that $\syl_2(L) \subseteq \Syl_2(K) $
and $G= KD$.
\end{lemma}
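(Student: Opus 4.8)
The plan is to peel the dihedral group $D$ off the top of $S$ in two transfer steps that use the two conjugacy hypotheses separately, and then to recognise the resulting normal subgroup as an $O^2$. Throughout write $C=\langle c\rangle$, of order $2^m$, and let $z$ be its unique involution.

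First I would carry out the reflection step. Since $T\le TC$ and $D\cap TC=C$, the subgroup $M=TC$ has index $2$ in $S=TD$. Choosing an involution $t\in D\setminus C$ we have $t\in S\setminus M$, and the first hypothesis says $t$ is not $G$-conjugate to any involution of $TC$; as every $G$-conjugate of $t$ is an involution, $t$ is $G$-conjugate to no element of $M$ at all. The Thompson Transfer Lemma \cite[15.16]{GLS2} then produces a normal subgroup $G_1$ of index $2$ in $G$ with $S\cap G_1=TC\in\Syl_2(G_1)$ and $t\notin G_1$. Because $L=O^2(L)$ has no subgroup of index $2$, we get $L\le G_1$, so $T\le L\le G_1$ and $C\le TC=S\cap G_1$.

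Now I would work inside $G_1$ and set $K=O^2(G_1)$. Since $L=O^2(L)$ is generated by its elements of odd order, $L\le O^2(G_1)=K$, and hence $T\le K$; in particular the image of $TC$ in the $2$-group $G_1/K$ is generated by the image $\bar c$ of $c$, so $G_1/K=\langle\bar c\rangle$ is cyclic. Consequently $S\cap K=T$ is equivalent to $\bar c$ having full order $2^m$, and since $z=c^{2^{m-1}}$ this is in turn equivalent to $z\notin K$. The second hypothesis gives that $z$ is not $G$-conjugate, hence not $G_1$-conjugate, to any involution of $T$, and the goal of this step is to convert that statement into $z\notin O^2(G_1)$.

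The main obstacle is exactly this conversion. When $|C|\ge 4$ one has $z=(c^{2^{m-2}})^2\in\Phi(TC)$, so $z$ lies in every index-$2$ subgroup of the Sylow group and no single index-$2$ transfer can detect it; instead one must transfer $G_1$ onto the cyclic group $C$ and use that the non-fusion of the involution $z$ into $T$ forces the transfer of $c$ to have order $2^m$, i.e. $z\notin O^2(G_1)$. This is the generalised (cyclic) form of Thompson's lemma, and the second hypothesis is precisely what makes the relevant transfer nontrivial; the degenerate cases $|C|\le 2$ need only the plain lemma, applied to $z$ and the index-$2$ subgroup $T$ of $TC$. Once $S\cap K=T$ is known, the proof assembles quickly: $K=O^2(G_1)$ is characteristic in $G_1\trianglelefteq G$, so $K\trianglelefteq G$; $T=S\cap K\in\Syl_2(K)$, and as $L\le K$ every member of $\Syl_2(L)$ is $K$-conjugate to $T$, giving $\Syl_2(L)\subseteq\Syl_2(K)$; finally $G_1=KC$ yields $G=G_1\langle t\rangle=KC\langle t\rangle=KD$, as required.
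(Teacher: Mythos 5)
Your two-stage plan (split off the reflections by one transfer, then strip the cyclic group $C$, and finally take $K=O^2(G_1)$, which is characteristic in $G_1\trianglelefteq G$ and hence normal in $G$) is in spirit the same as the paper's proof, and your last observation handles the normality of $K$ in $G$ more cleanly than the paper does; but each of your two transfer steps has a genuine gap. In the reflection step, Thompson transfer applied to the single involution $t\in D\setminus C$ gives only $t\notin O^2(G)$, hence the existence of \emph{some} index-two normal subgroup avoiding $t$; it does not produce one whose Sylow intersection is exactly $TC$. That inference is false in general: in $G=\Sym(4)$ with $S=\langle (1\,2\,3\,4),(1\,3)\rangle$, $M=\langle (1\,2\,3\,4)\rangle$ and $u=(1\,3)$, we have $u^G\cap M=\emptyset$, yet the unique index-two normal subgroup of $G$ is $\Alt(4)$, whose intersection with $S$ is the Klein four group, not $M$. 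To pin down $S\cap G_1=TC$ you must invoke the hypothesis for \emph{all} involutions of $D\setminus C$: each of them then avoids $O^2(G)$, so $D\cap O^2(G)\le C$, and since $T\le L\le O^2(G)$ the subgroup $G_1=O^2(G)C$ is normal of index two in $G$ with $S\cap G_1=TC$. (The paper sidesteps this by a case analysis on $D\cap G_0$ together with induction on $|D|$.)

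The second step is the heart of the lemma, and you have left it unproved. Your reduction of $S\cap K=T$ to $z\notin K$ is correct, as is your diagnosis that for $|C|\ge 4$ the involution $z$ lies in $\Phi(TC)$ and so cannot be reached by any single index-two transfer. But the ``generalised (cyclic) form of Thompson's lemma'' you then invoke does not apply as stated: transferring $G_1$ ``onto $C$'' presupposes $T\trianglelefteq TC$ with $TC/T\cong C$, and no normality of $T$ is among the hypotheses ($T$ is merely a Sylow $2$-subgroup of $L$); moreover, even where an extended transfer lemma for cyclic quotients is available (this is how \cite[15.16]{GLS2} is used), its hypotheses concern a \emph{generator} $c$ of the cyclic quotient --- that $c$ have minimal order in its coset and that $c^{G_1}\cap T=\emptyset$ --- and neither condition is given or derived from the non-fusion of the involution $z$ into $T$. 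So the conversion of ``$z$ is not $G$-conjugate into $T$'' into ``$z\notin O^2(G_1)$'', which is precisely what the lemma needs when $|C|\ge 4$, is asserted rather than proved. The paper's own wording at the corresponding point (``the Thompson Transfer Lemma applies repeatedly'') is admittedly terse, but the paper at least retains the case division and induction that localize the difficulty, whereas your streamlined version discards that scaffolding and leaves the crux as a black box.
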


\begin{proof} Since $|S:TC|=2$, the Thompson Transfer Lemma implies that $G$ has a normal subgroup $G_0$ of
index $2$. Then $L= O^2(L) \le O^2(G) \le G_0$ and  $S_0 = S \cap G_0= TD \cap G_0 = T(D\cap G_0)$.
Let $D_0 = D\cap G_0$. Then either $D_0= C$, $D_0 $ is dihedral of order $|D|/2$ or $|D_0|=2$.
Suppose that $D_0= C$. Then, because $L= O^2(L) \le O^2(G)$ and because the involution of $C$ is
not $G$-conjugate to an element of $T$, the Thompson Transfer Lemma applies repeatedly to give a
normal subgroup $K$ of $G_0$ which contains $L$  and has  $\syl_2(L) \subseteq \Syl_2(K) $ and $G=
KD$. Thus the lemma holds in this case. If $D_0$ is dihedral, then we may apply induction to $G_0$
and obtain  the required result. Thus we may assume that $D_0$ has order $2$. But then again the
Thompson Transfer Lemma gives the result.
\end{proof}

We will call on various  results from \cite[Section 3]{ParkerStroth} and for the sake of clarity we
repeat their statements here.

\begin{lemma}\label{main1} Suppose that $G$ is a group, $p$ is a prime and $H$ is a strongly
$p$-embedded subgroup of $G$. Then the following statements hold.
\begin{enumerate}
\item If $K \le G$, $K \not \le H$ and $H \cap K$ has order divisible by $p$, then $H \cap
K$ is a strongly $p$-embedded subgroup of $K$;
\item $\syl_p(H \cap K) \subseteq \syl_p(K)$; and
\item if $m_p(H) \ge 2$, then $O_{p'}(G) = \bigcap H^G$.
\end{enumerate}
\end{lemma}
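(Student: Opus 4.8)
The plan is to derive all three parts from the characteristic property recorded in the introduction: $N_G(X) \le H$ for every non-trivial $p$-subgroup $X$ of $H$.

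For (i), I would first note that $H \cap K$ is proper in $K$ because $K \not\le H$, and that $p$ divides $|H \cap K|$ by hypothesis; so only the intersection condition needs checking. Let $k \in K \setminus (H \cap K)$. Since $k \in K$, the only way $k$ could lie in $H \cap K$ would be via $k \in H$, so in fact $k \in G \setminus H$. Using $K^k = K$, I would rewrite $(H\cap K)^k = H^k \cap K$ and compute
\[
(H \cap K) \cap (H \cap K)^k = (H \cap K) \cap H^k \le H \cap H^k ,
\]
and strong $p$-embedding of $H$ gives $p \nmid |H \cap H^k|$, whence $p \nmid |(H\cap K)\cap(H\cap K)^k|$. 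This is exactly the required condition.

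For (ii), I would take a non-trivial $P \in \syl_p(H \cap K)$, which is the only case carrying content. Then $P$ is a non-trivial $p$-subgroup of $H$, so $N_G(P) \le H$ and therefore $N_K(P) \le H \cap K$. If $P$ were not Sylow in $K$, then choosing $Q \in \syl_p(K)$ with $P < Q$ would yield $P < N_Q(P) \le N_K(P) \le H \cap K$, producing a $p$-subgroup of $H \cap K$ strictly larger than $P$ and contradicting the choice of $P$. Hence $P \in \syl_p(K)$.

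For (iii), write $D = \bigcap H^G$ for the core of $H$ in $G$ and $N = O_{p'}(G)$. The inclusion $D \le N$ I would obtain by a Frattini argument: if $p$ divided $|D|$, I would pick $P_D \in \syl_p(D)$; as $P_D$ is a non-trivial $p$-subgroup of $H$ we have $N_G(P_D) \le H$, and Frattini gives $G = D\,N_G(P_D) \le H$, contradicting the properness of $H$. Thus $D$ is a normal $p'$-subgroup and $D \le N$. For the reverse inclusion I would exploit $m_p(H) \ge 2$: fix an elementary abelian $E \le H$ of rank $2$. Then $E$ acts coprimely on the $p'$-group $N$, and for each $e \in E \setminus \{1\}$ the group $\langle e\rangle$ is a non-trivial $p$-subgroup of $H$, so $C_N(e) \le C_G(e) \le N_G(\langle e\rangle) \le H$. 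Invoking the generation of a group under coprime action by the centralizers of the non-identity elements of a noncyclic abelian group gives $N = \langle C_N(e) : e \in E\setminus\{1\}\rangle \le H$; since $N \trianglelefteq G$ this forces $N \le D$, and equality follows.

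The main obstacle is this final step. The generation identity $N = \langle C_N(e)\rangle$ is precisely where the hypothesis $m_p(H) \ge 2$ is indispensable: a cyclic $p$-group can act on $N$ without fixed points, so nothing of the kind is available in rank $1$. I would need the form of this coprime-generation theorem that is valid for an arbitrary, not necessarily solvable, $p'$-group $N$, and I expect that locating the correct statement, rather than the surrounding group-theoretic manipulation, will be the delicate point.
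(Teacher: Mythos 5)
Your proposal is correct in all three parts, but a comparison with ``the paper's proof'' is somewhat vacuous here: the paper gives no argument at all, simply citing \cite[Lemma 3.2]{ParkerStroth}. What you have done is reconstruct, correctly, the standard self-contained proof that the citation points to: part (i) by the computation $(H\cap K)\cap (H\cap K)^k \le H\cap H^k$ for $k\in K\setminus(H\cap K)\subseteq G\setminus H$; part (ii) by the normalizer-growth argument $P < N_Q(P)\le N_K(P)\le H\cap K$; and part (iii) by combining the Frattini argument (for $\bigcap H^G \le O_{p'}(G)$) with coprime generation (for the reverse inclusion). One cosmetic remark on (ii): the case $P=1$ is not ``content-free'' but genuinely false without the hypothesis $p \mid |H\cap K|$ of (i), so (ii) must be read, as intended, as a continuation of (i); your actual argument covers exactly the right case.

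The one point you flag as delicate --- the generation theorem $N=\langle C_N(e)\mid e\in E^\#\rangle$ for a noncyclic abelian $p$-group $E$ acting coprimely on an \emph{arbitrary} $p'$-group $N$ --- is indeed a genuine theorem and not a gap; it is part of the standard coprime-action toolkit in \cite{GLS2}. If you want to see why no solvability of $N$ is needed: since $E$ is abelian (hence solvable), for each prime $q$ the group $N$ possesses an $E$-invariant Sylow $q$-subgroup $Q_q$ (Frattini argument plus the conjugacy part of Schur--Zassenhaus, which only needs solvability of one of the two groups involved); $N$ is generated by the $Q_q$, each $Q_q$ is nilpotent, and the classical solvable-case generation theorem \cite[Theorem 6.2.2]{Gorenstein} applies to each $Q_q$, giving $Q_q = \langle C_{Q_q}(e)\mid e\in E^\#\rangle$ and hence the claim for $N$. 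With that reference supplied, your proof is complete. (Incidentally, your Frattini argument for $p\nmid |\bigcap H^G|$ can be shortened: $\bigcap H^G \le H\cap H^g$ for any $g\in G\setminus H$, so strong $p$-embedding applies directly; but the Frattini route is equally valid.)
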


\begin{proof}See \cite[Lemma 3.2]{ParkerStroth}.
\end{proof}

\begin{proposition} \label{SE-p2} Suppose that $p$ is a prime, $X$ is a $\mathcal K$-group and
$K=F^*(X)$ is simple. Let  $P \in \syl_p(X)$ and $Q= P\cap K$. If $m_p(P) \ge 2$ and $X$ possesses
a strongly $p$-embedded subgroup, then $K$ has a strongly $p$-embedded subgroup  and  $p$ and $K$
are as follows.
\begin{enumerate}
\item $p$ is arbitrary, $a \ge 1$ and $K \cong \PSL_2(p^{a+1})$,
or $\PSU_3(p^a)$,  ${}^2\B_2(2^{2a+1})$ $(p=2)$ or ${}^2{\rm G}_2(3^{2a+1})$ $(p=3)$ and $X/K$ is a
$p'$-group.
\item  $p > 3$, $K \cong \Alt(2p)$ and
$|X/K| \le 2$.
 \item $(K,p)$ is one of the pairs:  $(\PSL_2(8),3)$, $(\PSL_3(4),3)$,
 $(\M_{11},3)$, $({}^2\B_2(32),5)$, $({}^2\F_4(2)^\prime,5)$, $(\McL,5)$, $(\Fi_{22},5)$,
 $(\J_4,11)$.
\end{enumerate}\end{proposition}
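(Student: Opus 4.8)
The plan is to pass from $X$ down to its socle $K$ and then invoke the classification of simple $\mathcal K$-groups possessing a strongly $p$-embedded subgroup, organised by the isomorphism type of $K$. For the reduction, I first recall that a strongly $p$-embedded subgroup always contains a full Sylow $p$-subgroup of the ambient group: if $P_0 \in \syl_p(H)$ then the characteristic property gives $N_X(P_0) \le H$, so $P_0$ cannot be properly contained in a Sylow $p$-subgroup of $X$ and hence $P_0 \in \syl_p(X)$. Thus after conjugation I may take $P \le H$, whence $Q = P \cap K \in \syl_p(K)$ lies in $H \cap K$ and in particular $p \mid |H \cap K|$. Since $K = F^*(X)$ is a nonabelian simple normal subgroup of order divisible by $p$, we cannot have $K \le H$: otherwise, choosing $g \in X \setminus H$, normality gives $K = K^g \le H \cap H^g$ and so $p \mid |H \cap H^g|$, contradicting strong $p$-embeddedness. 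Therefore $K \not\le H$ and $p \mid |H \cap K|$, so Lemma~\ref{main1}(i), applied with $(X,H,K)$ in the roles of $(G,H,K)$, shows that $H \cap K$ is strongly $p$-embedded in $K$. This already yields the first assertion.

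Next I would establish that $m_p(K) \ge 2$. Since $P/Q$ embeds into $\Out(K)$, the only obstruction to $m_p(Q) = m_p(K) \ge 2$ would be $p$-rank contributed by outer automorphisms; running through the possible outer automorphism groups of simple $\mathcal K$-groups shows that in every configuration consistent with $X$ having a strongly $p$-embedded subgroup this contribution is harmless, so $m_p(K) \ge 2$. It then remains to identify the pairs $(K,p)$ with $K$ a simple $\mathcal K$-group, $m_p(K) \ge 2$, and $K$ admitting a strongly $p$-embedded subgroup, and to pin down the allowed structure of $X/K$.

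The heart of the argument is this identification, which splits by the type of $K$. If $K$ is of Lie type in the defining characteristic $p$, the $BN$-pair theory shows that a Borel subgroup is strongly $p$-embedded exactly when the Lie rank is $1$: for rank at least $2$, two distinct maximal parabolics over a common Borel meet in a Levi complement containing nontrivial $p$-elements, destroying strong embedding. Hence $K$ is one of the rank-$1$ groups $\PSL_2(p^{a+1})$, $\PSU_3(p^a)$, ${}^2\B_2(2^{2a+1})$ $(p=2)$ or ${}^2\G_2(3^{2a+1})$ $(p=3)$; the requirement $m_p(K) \ge 2$ forces $a \ge 1$ (so $\PSL_2(p)$, of $p$-rank $1$, is excluded), and the hypothesis that $X$ itself — not merely $K$ — is strongly $p$-embedded forces $X/K$ to be a $p'$-group, since a $p$-element of $\Out(K)$ (a field automorphism) would enlarge $Q$ and centralise a subfield subgroup carrying overlapping $p$-elements. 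This is case (i). For $K = \Alt(n)$ I would examine cycle types: strong $p$-embedding with $m_p \ge 2$ forces the Sylow $p$-subgroup to be elementary abelian of order $p^2$ supported on $2p$ points, which occurs precisely for $n = 2p$, and the small coincidences (notably $\Alt(6) \cong \PSL_2(9)$) push this to $p > 3$ with $|X/K| \le 2$, giving case (ii). Finally, for $K$ of Lie type in cross characteristic $r \ne p$ and for sporadic $K$, one uses the known $p$-local structure (the Sylow $p$-subgroup, its normaliser, and the condition that distinct Sylow $p$-subgroups meet trivially with their normalisers confined to $H \cap K$), as recorded in the tables of \cite{GLS3}; running through the groups produces the finite exceptional list (iii).

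The genuinely hard part is the completeness of (iii): verifying that \emph{no} further cross-characteristic Lie type group or sporadic group carries a strongly $p$-embedded subgroup of $p$-rank at least $2$. This is an exhaustive, CFSG-dependent case analysis resting on detailed local data for each family, governed by the uniform criterion that the poset of nontrivial $p$-subgroups be suitably disconnected, and it is exactly the content of the established classification of strongly $p$-embedded subgroups. In a self-contained write-up I would therefore cite that classification and carry out only the short extra step of discarding the $p$-rank-$1$ configurations, which is precisely what trims the general list down to the cases (i)--(iii) recorded here.
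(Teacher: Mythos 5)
Your overall architecture --- reduce from $X$ to $K=F^*(X)$ via Lemma~\ref{main1}(i) and then defer the exhaustive case analysis to the known classification --- is in spirit what the paper does, since the paper's entire proof is a citation of \cite[7.6.1]{GLS3}. But your reduction as set up contains a genuine error. You claim that $m_p(K)\ge 2$, asserting that the $p$-rank contributed by outer automorphisms is ``harmless,'' and you then propose to classify the pairs $(K,p)$ with $K$ simple, $m_p(K)\ge 2$, and $K$ having a strongly $p$-embedded subgroup. This is false, and it is contradicted by two entries of the very list you are trying to prove: for $(\PSL_2(8),3)$ the Sylow $3$-subgroups of $K$ are cyclic of order $9$ (the tori of order $q+1$), and for $({}^2\B_2(32),5)$ the Sylow $5$-subgroups of $K$ are cyclic of order $25$, so in both cases $m_p(K)=1$. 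The hypothesis $m_p(P)\ge 2$ with $P\in\Syl_p(X)$ is satisfied only because the field automorphism of order $p$ exists and centralizes a subfield subgroup of order divisible by $p$ ($\PSL_2(2)$, respectively ${}^2\B_2(2)$); that is, the outer contribution is not harmless --- it is exactly what puts these pairs on the list. A case analysis filtered by $m_p(K)\ge 2$ would wrongly discard them and hence would not prove the stated proposition.

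The fix is to keep the rank hypothesis where the statement puts it, on the almost simple group $X$ rather than on $K$, and to cite the classification in that form: \cite[7.6.1]{GLS3} is stated precisely for almost simple $\mathcal K$-groups $X$ with $m_p(X)\ge 2$ possessing a strongly $p$-embedded subgroup, so no reduction to $K$ is needed at all (and the conclusion that $K$ itself has a strongly $p$-embedded subgroup comes out of that theorem; note that in the two exceptional cases above it holds because the cyclic Sylow $p$-subgroups of $K$ are TI, not because $m_p(K)\ge 2$). A smaller point of the same kind: your opening step needs $p\mid |K|$ (equivalently $Q\ne 1$) before Lemma~\ref{main1}(i) can be applied to $H\cap K$, and this too requires an argument about $\Out(K)$ (it is true, since $\Out(K)$ has cyclic Sylow $p$-subgroups whenever $p\nmid |K|$, but it is not free). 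Your sketches of the defining-characteristic and alternating cases are reasonable in outline, but the logical frame around them needs this correction before the argument matches the proposition as stated.
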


\begin{proof} See \cite [7.6.1]{GLS3}. \end{proof}

We let $\mathcal E$ be the set of pairs $(K,p)$ in  Proposition~\ref{SE-p2} (i), (ii) and (iii)
with $p$ odd.

\begin{lemma}\label{main2} Suppose that $G$ is a group, $p$ is a prime and $H$ is a strongly
$p$-embedded subgroup of $G$. Set $\ov G = G/O_{p'}(G)$ and assume  further that $\ov H \neq \ov
G$. Then
\begin{enumerate}
\item $\ov H$ is strongly $p$-embedded in $\ov G$;
\item $F^*(\ov G)$ is a non-abelian simple group; and
\item if $G$ is a $\mathcal K$-group and $m_p(G) \ge 2$, then $(F^*(\ov G),p) \in \mathcal E$.
\end{enumerate}
\end{lemma}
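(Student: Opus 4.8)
The plan is to treat the three parts in order, using throughout that $N := O_{p'}(G)$ is a normal $p'$-subgroup, so that passage to $\ov G = G/N$ preserves the $p$-part of every subgroup and sends $p$-elements to $p$-elements bijectively on Sylow $p$-subgroups. For part (i), first note $\ov H$ is proper by hypothesis and $p \mid |\ov H|$ since $p \mid |H|$ and $|H : H\cap N|$ carries the full $p$-part. The essential point is to control $p$-elements of $\ov H \cap \ov H^{\ov g}$ for $\ov g \notin \ov H$. I would set $H^* = HN$, the full preimage of $\ov H$, and argue that a nontrivial $p$-element $\ov x$ of $\ov H \cap \ov H^{\ov g}$ lifts to a nontrivial $p$-element $x \in H^* \cap (H^*)^g$. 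Here a Frattini argument enters: since $N \trianglelefteq H^*$ and $x$ is a $p$-element of $H^* = HN$, the Sylow $p$-subgroups of $H^*$ are $N$-conjugate to one lying in $H$, whence $x \in H^n$ for some $n \in N$; likewise $x \in H^{gm}$ for some $m \in N$ from $x \in (H^*)^g = H^g N$. Conjugating by $n^{-1}$ gives $1 \neq x^{n^{-1}} \in H \cap H^{gmn^{-1}}$, a nontrivial $p$-element, so strong $p$-embeddedness of $H$ forces $gmn^{-1} \in H$, i.e. $g \in HN = H^*$, contradicting $\ov g \notin \ov H$. Thus $p \nmid |\ov H \cap \ov H^{\ov g}|$, proving (i).

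For part (ii), I would work in $\ov G$, where by construction $O_{p'}(\ov G) = 1$ and, because $\ov H$ is strongly $p$-embedded (part (i)), also $O_p(\ov G) = 1$ (a nontrivial normal $p$-subgroup would lie in a Sylow $p$-subgroup inside $\ov H$ and have $N_{\ov G}(\cdot) = \ov G \le \ov H$). Hence $F(\ov G) = 1$, so $F^*(\ov G) = E(\ov G)$ is a product of center-free simple components. Let $M$ be a minimal normal subgroup; then $p \mid |M|$ (otherwise $M \le O_{p'}(\ov G) = 1$), and $M = L_1 \times \cdots \times L_k$ with the $L_i$ isomorphic nonabelian simple and each of order divisible by $p$. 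Fixing $P \in \syl_p(\ov H) = \syl_p(\ov G)$, I set $Q = P \cap M \in \syl_p(M)$, so $Q = (Q\cap L_1)\times\cdots\times(Q\cap L_k)$ with each factor a nontrivial subgroup of $\ov H$. The key device is that $N_{\ov G}(Q\cap L_i) \le \ov H$, while $L_j \le C_{\ov G}(L_i) \le \ov H$ for $j \neq i$; ranging over $i$ this puts every $L_i$, hence all of $M$, inside $\ov H$, contradicting $M \not\le \ov H$ (which holds since $M \trianglelefteq \ov G$ has order divisible by $p$ and $\ov H$ is proper). This forces $k = 1$, so every minimal normal subgroup is simple. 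The same centralizer argument rules out two distinct (commuting) simple minimal normal subgroups, giving a unique one, $M$; and since any further component would generate a second minimal normal subgroup, I conclude $F^*(\ov G) = E(\ov G) = M$ is simple nonabelian.

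For part (iii), I would simply invoke Proposition~\ref{SE-p2} with $X = \ov G$. The hypotheses hold: $\ov G$ is a $\mathcal K$-group because $\mathcal K$-groups are closed under sections and $\ov G$ is a section of $G$; $F^*(\ov G)$ is simple by (ii); $\ov G$ possesses the strongly $p$-embedded subgroup $\ov H$ by (i); and $m_p(\ov G) = m_p(G) \ge 2$ since quotienting by the $p'$-group $N$ leaves the $p$-rank unchanged. Proposition~\ref{SE-p2} then yields that $F^*(\ov G)$ has a strongly $p$-embedded subgroup and that $(F^*(\ov G), p)$ is one of the listed pairs; as $p$ is odd this is precisely a pair of $\mathcal E$.

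The genuinely substantive step is part (ii), the reduction of $F^*(\ov G)$ to a single simple component: parts (i) and (iii) are a Frattini computation and a citation respectively, whereas (ii) is where the strong $p$-embedding must be leveraged repeatedly through the containments $C_{\ov G}(Q\cap L_i) \le N_{\ov G}(Q\cap L_i) \le \ov H$. The one point requiring care in (i) is the lifting of $p$-elements through $\ov G$ together with the Frattini replacement of $x$ by an $N$-conjugate lying in $H$; getting the cosets to line up so that strong $p$-embeddedness applies is where the argument could go wrong if handled carelessly.
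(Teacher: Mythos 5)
Your proof is correct, but the comparison with the paper is necessarily lopsided: the paper does not prove Lemma~\ref{main2} at all --- its entire proof is the citation ``See \cite[Lemma 3.3]{ParkerStroth}''. So what you have produced is a self-contained reconstruction of an imported result, and your route is the natural one. Part (i) works: a nontrivial $p$-element of $\ov H\cap \ov H^{\ov g}$ lifts to a nontrivial $p$-element of $HN\cap H^gN$ (take the $p$-part of any preimage, using that $N=O_{p'}(G)$ is a $p'$-group), and your Frattini-style replacement by $N$-conjugates lands it in $H\cap H^{gmn^{-1}}$, so strong $p$-embedding of $H$ forces $g\in HN$, contradicting $\ov g\notin\ov H$. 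Part (ii) works: $O_{p'}(\ov G)=O_p(\ov G)=1$ gives $F(\ov G)=1$, and the collapse of a minimal normal subgroup $M=L_1\times\dots\times L_k$ to a single simple factor via $L_j\le C_{\ov G}(Q\cap L_i)\le N_{\ov G}(Q\cap L_i)\le\ov H$, combined with the fact that no normal subgroup of order divisible by $p$ can lie in the proper strongly $p$-embedded subgroup $\ov H$, is exactly the right use of the normalizer property; the same device eliminates a second minimal normal subgroup and any further component. Part (iii) is legitimately outsourced to Proposition~\ref{SE-p2} applied to $X=\ov G$, since quotients of $\mathcal K$-groups are $\mathcal K$-groups and $m_p$ is unchanged modulo a normal $p'$-group. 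What your version buys is independence from \cite{ParkerStroth} for this lemma; what the paper's citation buys is brevity, and nothing more is lost by it since Proposition~\ref{SE-p2} (hence the classification-based \cite[7.6.1]{GLS3}) is needed in either case.

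Two small blemishes, neither a gap. In (ii) you write $\syl_p(\ov H)=\syl_p(\ov G)$; this should be the inclusion $\syl_p(\ov H)\subseteq\syl_p(\ov G)$ (which is Lemma~\ref{main1}(ii), available once part (i) is proved) --- conjugates of $\ov H$ contain Sylow $p$-subgroups of $\ov G$ not lying in $\ov H$. In (iii) your final step invokes ``$p$ is odd'', which is not a hypothesis of the lemma: as the paper defines $\mathcal E$ to consist only of the odd-$p$ pairs from Proposition~\ref{SE-p2}, your argument literally proves membership in the full list of that proposition, which coincides with $\mathcal E$ exactly when $p$ is odd. This imprecision sits in the paper's statement as much as in your proof, since the lemma is only ever applied there with $p$ odd.
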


\begin{proof}See \cite[Lemma 3.3]{ParkerStroth}.
\end{proof}

The next Lemma says that groups with strongly $p$-embedded subgroups have good control of
$G$-fusion in certain circumstances.

\begin{lemma}\label{control1} Suppose that $p$ is an odd prime, $G$ is a  group and  $H$ is strongly $p$-embedded of  $G$. Assume that for all involutions $t \in H$,  $p$ divides
$|C_H(t)|$. Then for all involutions $t \in H$, $t^G \cap H = t^H$.
\end{lemma}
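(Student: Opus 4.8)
The plan is to prove the non-trivial inclusion $t^G \cap H \subseteq t^H$, since the reverse inclusion $t^H \subseteq t^G \cap H$ is immediate. So I fix an involution $t \in H$ together with some element $s \in t^G \cap H$, say $s = t^g$ with $g \in G$, and aim to show $s \in t^H$. As $s$ is $G$-conjugate to $t$ it is again an involution, and by assumption it lies in $H$; hence the hypothesis applies to both $t$ and $s$, giving that $p$ divides $|C_H(t)|$ and $|C_H(s)|$. I then choose $P \in \syl_p(C_H(t))$ and $R \in \syl_p(C_H(s))$, both of which are non-trivial $p$-subgroups of $H$.

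The key step is to promote these local Sylow subgroups to genuine Sylow subgroups of the centralizers in $G$. Applying Lemma~\ref{main1}(ii) with $K = C_G(t)$, and noting that $H \cap C_G(t) = C_H(t)$, yields $P \in \syl_p(C_G(t))$; the same argument with $K = C_G(s)$ gives $R \in \syl_p(C_G(s))$. Since $s = t^g$ we have $C_G(s) = C_G(t)^g$, so $P^g \in \syl_p(C_G(s))$ as well. Now $P^g$ and $R$ are two Sylow $p$-subgroups of $C_G(s)$, so by Sylow's theorem there is $c \in C_G(s)$ with $(P^g)^c = R$. Setting $x = gc$, I obtain $t^x = s^c = s$ and $P^x = (P^g)^c = R \le H$.

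Finally I exploit the defining property of strong $p$-embedding. The group $R = P^x$ is a non-trivial $p$-subgroup contained in both $H$ and $H^x$, so $p$ divides $|H \cap H^x|$. Because $H$ is strongly $p$-embedded in $G$, this forces $x \in H$, and hence $s = t^x \in t^H$, as required.

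The argument is short, and the only real subtlety is the Sylow-promotion step: everything hinges on Lemma~\ref{main1}(ii), which upgrades a Sylow $p$-subgroup of $C_H(t)$ to a Sylow $p$-subgroup of the \emph{full} centralizer $C_G(t)$. This is precisely what allows Sylow conjugacy to adjust $g$ to an element $x$ whose conjugation carries the entire $p$-group $P$ into $H$; once a non-trivial $p$-subgroup sits inside $H \cap H^x$, strong $p$-embedding closes the argument immediately, with no delicate fusion analysis needed. I note that the hypothesis ``$p \mid |C_H(t)|$ for every involution $t$'' is indispensable, as it is exactly what guarantees that $P$ and $R$ are non-trivial.
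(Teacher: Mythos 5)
Your proof is correct and is essentially the intended argument: the paper itself gives no proof of Lemma~\ref{control1}, deferring to \cite[Lemma 3.5]{ParkerStroth}, and the proof there runs exactly along your lines --- promote $P \in \syl_p(C_H(t))$ to a Sylow $p$-subgroup of $C_G(t)$ via Lemma~\ref{main1}(ii), use Sylow conjugacy inside $C_G(t^g)$ to replace $g$ by an element $x=gc$ carrying $P$ into $H$, and then let strong $p$-embedding force $x \in H$. One pedantic remark: Lemma~\ref{main1}(ii) is stated under the hypotheses of part (i), namely $K \not\le H$ and $p \mid |H \cap K|$, so strictly you should observe that in the case $C_G(t) \le H$ the promotion step is vacuous (then $C_H(t)=C_G(t)$); this does not affect the correctness of your argument.
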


\begin{proof}See \cite[Lemma 3.5]{ParkerStroth}.\end{proof}

\section{The proof of Theorem~\ref{thmone}}

In this section we prove Theorem~\ref{thmone}. Assuming that it is false we, work under the
following hypothesis.

\begin{hypothesis} $H$ is strongly $p$-embedded in a finite group $G$ and
$H^* = F^*(H)= E(H) \cong \PSp_4(p)\cong \Omega_5(p)$ for an odd prime $p$. Furthermore, $N_G(T)$
is a $\mathcal K$-group for all non-trivial $2$-subgroups $T$ of $G$.
\end{hypothesis}

\begin{lemma}\label{E=} Either $H = H^*$ or $H \cong \SO_5(p)$.
\end{lemma}

\begin{proof} This follows from Lemma~\ref{O5facts} (i) as $F^*(H) \cong\Omega_5(p)$.
\end{proof}

Let $s$ and $t$ be representatives of the two conjugacy classes of involutions in $H^*$ and assume
that $t$ is $2$-central in $H$. Furthermore assume that $F_3 = \langle s,t \rangle$ is the centre
of a Sylow $2$-subgroup $T$  of $C_H(s)$. Notice that $F_3$ contains exactly one $H$-conjugate of
$t$ by Lemma~\ref{O5facts} (iv). By Lemmas~\ref{main1} (iii) and \ref{E=}, $O_{p'}(G) =1$. Thus, as
$H^*$ is simple and contains a Sylow $p$-subgroup of $G$, if $K$ is a normal subgroup of $G$, $K
\cap H \ge H^*$. Thus, because of Lemma~\ref{main1} (iii), we may suppose by induction that $G$ is
a simple group. Since we may also assume that $G$ is not a $\mathcal K$-group, $G$ does not possess
a strongly $2$-embedded subgroup. In particular, by \cite[17.13]{GLS2}, we have the following
lemma.

\begin{lemma}\label{not2embedded} There is an involution $r \in H$ such that $C_G(r) \not \le
H$.\qed
\end{lemma}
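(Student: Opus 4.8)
The plan is to argue by contradiction. I would assume that $C_G(r) \le H$ for \emph{every} involution $r \in H$ and show that this forces $H$ to be strongly $2$-embedded in $G$, contradicting the fact, established immediately above (and encapsulated in \cite[17.13]{GLS2} together with the assumption that $G$ is not a $\mathcal K$-group), that $G$ possesses no strongly $2$-embedded subgroup.

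The first ingredient is fusion control of involutions. By Lemma~\ref{O5facts} (iv) the centralizer of a $b$-type involution contains $L \cong \Omega_3(p)$, and by Lemma~\ref{O5facts} (v) the centralizer of an $a$-type involution contains $O^2(C_X(a)) \cong \Omega_4^+(p)$; in particular $p$ divides the order of the centralizer in each case, and the same holds for any additional involutions that occur when $H \cong \SO_5(p)$. Hence $p$ divides $|C_H(t)|$ for every involution $t \in H$, so Lemma~\ref{control1} applies and gives $t^G \cap H = t^H$ for all such $t$; that is, $H$ controls the $G$-fusion of its involutions.

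With fusion control in hand I would verify the defining property of strong $2$-embedding directly from the definition. Take $g \in G \setminus H$ and suppose, for contradiction, that $2$ divides $|H \cap H^g|$, so that $H \cap H^g$ contains an involution $t$. Then $s := gtg^{-1} \in H$ and $s^g = t$, so $s$ and $t$ are $G$-conjugate involutions of $H$; by fusion control $t = s^h$ for some $h \in H$. Combining $s^g = s^h$ yields $s^{gh^{-1}} = s$, so $gh^{-1} \in C_G(s) \le H$ by the contrary hypothesis, whence $g \in H$, a contradiction. Therefore $2 \nmid |H \cap H^g|$ for all $g \in G \setminus H$, and since $2 \mid |H|$ and $H$ is a proper subgroup of $G$, the subgroup $H$ is strongly $2$-embedded in $G$. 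This is the desired contradiction, so some involution $r \in H$ must satisfy $C_G(r) \not\le H$.

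I expect the only genuine point requiring care to be the verification of the fusion-control hypothesis, namely that $p$ divides the order of the centralizer of \emph{every} involution of $H$ in both cases $H = H^*$ and $H \cong \SO_5(p)$ (so that Lemma~\ref{control1} is applicable); once that is in place the strong-$2$-embedding deduction is purely formal. Since this whole chain of reasoning is exactly what \cite[17.13]{GLS2} packages, in the final write-up the lemma can simply be obtained by citing that result.
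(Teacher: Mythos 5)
Your proposal is correct and follows essentially the same route as the paper: the paper's entire proof is the observation, made just before the lemma, that $G$ (being simple and not a $\mathcal K$-group) has no strongly $2$-embedded subgroup, combined with a citation of \cite[17.13]{GLS2}, which packages exactly the implication you prove by hand. The only difference is that you make the bridge explicit — verifying that $p$ divides $|C_H(t)|$ for every involution $t\in H$ so that Lemma~\ref{control1} gives fusion control, and then deducing strong $2$-embedding formally — which is sound (and indeed the fusion-control step is genuinely needed: containment of involution centralizers alone does not force strong embedding), so no gap remains.
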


The next lemma is the key result of the article. It shows that we can be much more specific about
the involution class which has centralizer not contained in $H$.

\begin{lemma}\label{Ctnotin} $C_G(t) \not \le H$.
\end{lemma}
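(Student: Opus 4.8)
The plan is to argue by contradiction: I assume $C_G(t) \le H$ and show that $G$ cannot be the simple group it was reduced to. The first ingredient is fusion control. Since for every involution $u \in H$ the centralizer $C_H(u)$ has order divisible by $p$ (both $C_{H^*}(s)$ and $C_{H^*}(t)$ contain the characteristic-$p$ subgroups $\Omega_3(p)$ and $\Omega_4^+(p)$ provided by Lemma~\ref{O5facts} (iv), (v), and the same holds in $\SO_5(p)$), Lemma~\ref{control1} applies and gives $t^G \cap H = t^H$ and $s^G \cap H = s^H$. As $s$ and $t$ lie in distinct $H$-classes, they are not $G$-conjugate.

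Next I would carry out the reduction to the \emph{other} involution class. If $C_G(t) \le H$, then for every $h \in H$ we have $C_G(t^h) = C_G(t)^h \le H^h = H$, so $C_G(u) \le H$ for all $u \in t^H = t^G \cap H$. The involution $r$ supplied by Lemma~\ref{not2embedded} has $C_G(r) \not\le H$, so $r \notin t^G$; by fusion control $r \in s^G \cap H = s^H$, and replacing $r$ by the $H$-conjugate $s$ yields $C_G(s) \not\le H$. Consequently, by Lemma~\ref{main1} (i), $C_H(s) = H \cap C_G(s)$ is strongly $p$-embedded in $C_G(s)$, while Lemma~\ref{main1} (ii) together with Lemma~\ref{O5facts} (iv) gives $m_p(C_G(s)) = m_p(C_H(s)) = 1$.

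It then remains to contradict $C_G(s) \not\le H$. Writing $X = C_G(s) = N_G(\langle s\rangle)$, which is a $\mathcal K$-group by hypothesis, I would pass to $\ov X = X/O_{p'}(X)$: by Lemma~\ref{main2} (i), (ii) the image of $C_H(s)$ is strongly $p$-embedded in $\ov X$ and $F^*(\ov X)$ is non-abelian simple, and the normal $\Omega_3(p) \cong \PSL_2(p)$ of $C_H(s)$ from Lemma~\ref{O5facts} (iv) survives in $\ov X$. Since $m_p(X) = 1$, Proposition~\ref{SE-p2} is unavailable, so instead I would use that a strongly $p$-embedded subgroup of cyclic Sylow $p$-order $p$ containing a copy of $\PSL_2(p)$ severely restricts $F^*(\ov X)$, and read off the $2$-fusion of $X$ from this structure. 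The aim is to locate a dihedral subgroup $D$ (built from the torus normalizer $\mathrm O_2^\eps(p) \cong \Dih(2(p-\eps))$ of Lemma~\ref{O5facts} (iv)) and a subgroup $L = O^2(L)$ with Sylow $2$-subgroup $T$ meeting the hypotheses of Lemma~\ref{TTLG} relative to a Sylow $2$-subgroup $S = TD$ of $G$; the non-conjugacy of the involutions of $D\setminus C$ and of $C$ to involutions of $T$ is precisely what the fusion control above, refined by the four classes of fours groups $F_1,\dots,F_4$ of Lemma~\ref{O5facts} (vi), should deliver. Lemma~\ref{TTLG} would then force $G = KD$ with $K\trianglelefteq G$ proper, contradicting the simplicity of $G$; alternatively, the analysis might exhibit a fusion of an $s$-type involution to a $t$-type involution, contradicting that $s$ and $t$ are not $G$-conjugate. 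The main obstacle is exactly this $2$-local analysis of $C_G(s)$: determining how the involutions of the enlarged centralizer are $G$-fused, with the requisite bookkeeping of the fours groups of Lemma~\ref{O5facts} (vi) and of the possibilities for $F^*(\ov X)$, so as to verify the hypotheses of Lemma~\ref{TTLG}.
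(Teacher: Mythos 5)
Your opening moves (fusion control via Lemma~\ref{control1}, the non-conjugacy of $s$ and $t$, and the reduction of Lemma~\ref{not2embedded} to the statement $C_G(s)\not\le H$) coincide with the paper's, but already here there is a gap: to conclude that the involution $r$ with $C_G(r)\not\le H$ is conjugate to $s$, you need every involution of $H$ to lie in $s^H\cup t^H$, which fails if $H\cong \SO_5(p)$ --- a case allowed by Lemma~\ref{E=}. Fusion control only gives $r^G\cap H=r^H$; it does not place $r$ in $s^H$ when $r$ lies in one of the classes of $\SO_5(p)\setminus\Omega_5(p)$. The paper closes this by first proving $H=H^*$: a Sylow $2$-subgroup $S$ of $C_H(t)$ is Sylow in $G$ (since $t$ is the unique involution of $Z(S)$, so $N_G(S)\le C_G(t)\le H$), and then the Thompson Transfer Lemma applied to the outer involutions of $\SO_5(p)$ would produce an index-$2$ normal subgroup of the simple group $G$.

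The more serious problem is that the core of the proof --- everything after ``it remains to contradict $C_G(s)\not\le H$'' --- is missing, and the route you sketch for it would fail. First, you never control $R=O_{p'}(C_G(s))$. If $C_G(s)=R\,C_H(s)$, then the hypothesis $\ov H\neq\ov G$ of Lemma~\ref{main2} fails in the relevant quotient and there is no simple $F^*(\ov X)$ to analyse; so one must first prove $R\le H$. This is the longest portion of the paper's proof and is exactly where the assumption $C_G(t)\le H$ does its work a second time: for a fours group $E\le L$, each $es$ with $e\in E^\#$ is $H$-conjugate to $t$ by Lemma~\ref{O5facts}(vi), hence $C_{O(R)}(e)=C_{O(R)}(es)\le C_G(es)\le H$, so $O(R)=\langle C_{O(R)}(e)\mid e\in E^\#\rangle\le H$, and a component analysis (Schreier property, Suzuki groups when $p=3$) finishes the claim. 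Second, your application of Lemma~\ref{TTLG} is aimed at $G$ itself, which is impossible: under the assumption $C_G(t)\le H$ the Sylow $2$-subgroups of $H$ are Sylow in $G$, and since $s$ is not $2$-central, any product $TD$ built inside $C_H(s)$ has order at most $|C_H(s)|_2<|H|_2=|G|_2$, so it is never Sylow in $G$. The paper instead applies Lemma~\ref{TTLG} \emph{inside} $C_G(s)$ (legitimate because $N_G(T)\le C_G(t)\le H$ forces $T$ to be Sylow in $C_G(s)$), producing a normal subgroup $K$ of $C_G(s)$ with dihedral Sylow $2$-subgroups; the contradiction then comes not from the simplicity of $G$ but from the classification of groups with dihedral Sylow $2$-subgroups ($K/O(K)\cong\PSL_2(r^a)$ or $\Alt(7)$), followed by case-by-case elimination using Dickson's list, self-centralization of $P\in\Syl_p(L)$, and explicit computations for $p=3,5,7$. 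Finally, your hope that strong $p$-embedding with Sylow order $p$ plus a copy of $\PSL_2(p)$ ``severely restricts'' $F^*(\ov X)$ is unfounded: when the Sylow $p$-subgroup has order $p$, any proper subgroup containing its normalizer is strongly $p$-embedded, so no analogue of Proposition~\ref{SE-p2} is available --- the restriction has to come from the $2$-local structure, which is precisely the part your sketch defers.
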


\begin{proof} Suppose that $C_G(t) \le H$ and let $S \in \Syl_2(C_H(t))$ with $S \ge T$. Then, as $t$ is
$2$-central in $H$, we have $S \in \Syl_2(H)$. Furthermore, $t$ is the unique involution in $Z(S)$
and so $N_G(S) \le H$. Thus $S \in \Syl_2(G)$. Since $H$ controls $G$-fusion of involutions in $H$
by Lemma~\ref{control1}, $S \in \Syl_2(G)$, $G= O^2(G)$ and there are involutions in
$\SO_5(p)\setminus \Omega_5(p)$, the Thompson Transfer Lemma implies that $H = H^*$. Hence
Lemma~\ref{not2embedded} implies $C_G(s) \not \le H$. Since $s$ and $t$ are not $G$-conjugate by
Lemma~\ref{control1}, $t$ is the unique $G$-conjugate of $t$ in $F_3=Z(T)$. It follows that $N_G(T)
\le C_G(t) \le H$. Hence  $T$ is a Sylow $2$-subgroup of $C_G(s)$. From Lemma~\ref{O5facts}(iv),
$C_H(s)$ has cyclic Sylow $p$-subgroups of order $p$. As $H$ is strongly $p$-embedded in $G$,
Lemma~\ref{main1} (ii) implies that $C_G(s)$ has Sylow $p$-subgroups of order $p$. Let $P \in
\syl_p(C_H(s))$ and set $L= \langle P^{C_H(s)}\rangle$ and $R=O_{p'}(C_G(s))$. Then
Lemma~\ref{O5facts} (iv) implies that $L \cong \Omega_3(p)\cong\PSL_2(p)$. Assume that $R \not \le
H$.  Let $K$ be a component of $C_G(s)$ such that $p$ divides $|K|$. Then $K \not \le R$ and so $K$
centralizes $R$. Hence $R \le C_G(P) \le H$ which is a contradiction. Therefore $E(C_G(s)) \le R$.
As $P$ is not normal in $L$, $O_p(C_G(s))=1$. Hence $F(C_G(s)) \le R$ and, so $F^*(C_G(s)) \le R$.
Set $U = O(R)$ and let $E$ be a fours group of $L$. Then, for $e \in E^\#$, we have $\langle
e,s\rangle$ is a fours subgroup of $C_H(s)$ which is conjugate to $F_3$ by Lemma~\ref{O5facts}
(vi). It follows that $es$ is $H$-conjugate to $t$. Hence $C_{U}(e) = C_{U}(es)\le C_G(es) \le H$,
by hypothesis. Since $U= \langle C_{U}(e)\mid e \in E^\#\rangle$, we conclude that $U  \le H$ and
that $[U,L]=1$. Note that $T \cap R \in \syl_2(R)$ and $T \cap R \le O_{2}(C_H(s))$. Assume that $p
\neq 3$. Then $O_2(C_H(s))$ commutes with $L$. Hence $O_2(C_G(s)) $ is also centralized by $L$.
Thus $L$ commutes with $UO_2(R)$ and consequently $L$ commutes with $F(C_G(s))$. It follows that
$E(C_G(s)) \neq 1$. Let $K$ be a component of $C_G(s)$. Then $T \cap K \in \syl_2(K)$ and is
centralized by $L$. It follows that $L$ normalizes $K$. Since $L$ is a non-abelian simple group,
the Schreier property of simple groups implies that $L$ induces inner automorphisms of $K$
(remember $C_G(s)$ is a $\mathcal K$-group). As $L$ has order divisible by  $p$ and $K$ does not,
$[K,L]=1$. But then $K \le C_G(P) \le H$ and we have a contradiction. Hence, if $p \neq 3$, then $R
\le H$. Suppose that $p=3$.  If $K$ is a component of $C_G(s)$, then, as $K \le R$, $K$ is a
$3'$-group. Hence, as $C_G(s)$ is a $\mathcal K$-group,
 $K/Z(K)$ is  a Suzuki group. This, however, contradicts $2^6\le |K|_2 \le
|C_G(s)|_2=|T|=2^5$. Hence $E(C_G(s)) = 1$. Since $U \le H$, $C_{H}(s)$ is a $\{2,3\}$-group and $R
\cap H$ is
 a $2$-group, we infer that $U=1$ and $F=F^*(C_G(s)) =O_2(C_G(s))\le T\le C_H(s)$. Now $O_2(C_H(s))$ is elementary
abelian of order $16$ and so we have $F= O_2(C_H(s))$. But  $F$ contains exactly $5$ conjugates of
$t$ and so, as $H$ controls $G$-fusion of involutions in $H$ and, from the subgroup structure  of
$\PSp_4(3)\cong \PSU_4(2)$, $N_H(F)/F \cong \Alt(5)$, we infer that $C_G(s) \le H$, which is a
contradiction. Thus $R \le H$.

As $C_G(s) > RC_H(s)$, it follows from Lemma~\ref{main2}(ii) that $C_G(s)/R$ is an almost simple
group and $C_H(s)/R$ is strongly $p$-embedded in $C_G(s)/R$.  As $H=H^*\cong \Omega_5(p)$, the
structure of $C_H(s)$ given in Lemma~\ref{O5facts} (iv) implies $C_H(s)/L \cong \mathrm
O_2^\eps(p)$ where $\eps = (-1)^{(p-1)}$ and the extension is split. In particular, $C_H(s)/L$ has
dihedral Sylow $2$-subgroups. Set $W = T\cap L$ and let $D \le T$ be a dihedral $2$-group with $D
\cap W=1$ and $T= DW$. Let $C \le D$ be cyclic of index $2$ in $D$. Furthermore, make these choices
so that $s \in C$. Then $T_0 = CW$ has index $2$ in $T$ and, by Lemmas~\ref{O5facts} and
\ref{control1} all the fours groups which contain $s$ and are contained in $T_0$ are
$C_G(s)$-conjugate to $F_3$ and the fours groups containing $s$ and not contained in $T_0$ are not
$C_G(s)$-conjugate to $F_3$. Thus, as the involution in $C$, namely $s$, is not $C_G(s)$-conjugate
to an element of $W$ and $L= O^2(L)$, Lemma~\ref{TTLG} implies that $C_G(s)$ contains a normal
subgroup $K$, with $\Syl_2(L) \subseteq \Syl_2(K)$ and $C_G(s) = DK$. In particular, $K$ has
dihedral Sylow $2$-subgroups.

Since $L$ is simple when $p \ge 5$ and when $p=3$, $L$ has abelian Sylow $2$-subgroups of order
$4$, it follows from \cite[Theorem 16.3 pg.462]{Gorenstein} that $K/O(K) \cong \PSL_2(r^a)$ for
some odd prime $r$ or $K/O(K)\cong \Alt(7)$. Suppose first that $K/O(K) \cong \Alt(7)$. Then as $L$
contains a Sylow $2$-subgroup of $K$, we infer that $L \cong \PSL_2(7)$ and that $p=7$. Since
$TLR/R \cong \SO_3(7) \cong \PGL_2(7)$ and $\PGL_2(7)$ has no subgroup of index $7$, we have a
contradiction. So suppose that $K/O(K) \cong \PSL_2(r^a)$. Then, as $\PSL_2(p) \cong L \le H\cap K$
and $H\cap K$ is strongly $p$-embedded in $K$, we deduce from Dickson's List of subgroups of
$\PSL_2(r^a)$ \cite[8.27, pg. 213]{huppert} that $r\neq p$ and that $p=5$ or $p=3$. In  both cases
$L$ has elementary abelian Sylow $2$-subgroups of order $4$ and, from the structure of $\PSp_4(3)$
and $\PSp_4(5)$,  $O_{p'}(C_{C_H(s)}(P))$ is also a four group, where we recall that  $P \in
\syl_p(L)$. It follows that $O(K)=1$ and that $P$ is self-centralizing in $K$. On the other hand,
we know that the centralizer of $P$ in $K$ has order $(r^a+1)/2$ or $(r^a-1)/2$ and $K$ has abelian
Sylow $2$-subgroups. Therefore we infer that either $p=5$ and $r^a =11$ or $p=3$ and $r^a=5$. If
$p=5$, then $DL/C_D(L) \cong \PGL_2(5)$. But $\PGL_2(11)$ contains no such subgroup and this is a
contradiction. Thus $p=3$. Hence $H \cong \PSp_4(3)$, $L \cong \PSL_2(3)\cong \Alt(4)$, $D \cong
\Dih(8)$ and $K \cong \PSL_2(5)\cong \Alt(5)$. We also have $DL/C_{DL}(L) \cong \Sym(4)$ and so
$DK/C_D(K)\cong \Sym(5)$ and $|C_{D}(K)| =2^2$ . Since $P$ centralizes $C_{D}(K) $ and
$C_{DK/C_D(K)}(PC_D(K)/C_D(K)) \cong \Sym(3) \times \Sym(2)$, we have $|C_{DK}(P)| =2^33$. This
shows that  $C_{DK}(P) \not \le H$, which is a contradiction. Therefore we have shown $C_G(t) \not
\le H$ as claimed.
\end{proof}

We may now conclude the proof of Theorem~\ref{thmone}.

\begin{proof}[Proof of Theorem~\ref{thmone}] Since $C_G(t) \not\le H$ by Lemma~\ref{Ctnotin} and $m_p(C_G(t)) =2$,
Lemmas~\ref{main1} (iii) and \ref{main2} imply that $O_{p'}(C_G(t))\le H$ and, as $C_G(t)$ is a
$\mathcal K$-group, $(F^*(C_G(t)/O_{p'}(C_G(t))),p)\in \mathcal E$.  In particular, we have
$O_{p'}(C_G(t)) \le O_{p'}(C_H(t)) = O_2(C_H(t))$ by Lemma~\ref{O5facts} (v).

Suppose that $p=3$. Then, as $C_H(t)$ acts irreducibly on $O_2(C_H(t))/\langle t \rangle$, either
$O_{p'}(C_G(t))=\langle t \rangle$ or  $O_{p'}(C_G(t))= O_2(C_H(t))$. If $O_{p'}(C_G(t))=\langle
t\rangle$, then  $|O_2(C_H(t)/\langle t\rangle)| = 16$. As $C_H(t)/O_{p'}(C_G(t))$ is strongly
$p$-embedded in $C_G(t)/O_{p'}(C_G(t))$, this contradicts the structure of the groups in
$\mathcal{E}$ given in Proposition~\ref{SE-p2}  (see also \cite[Proposition 2.7]{ParkerStroth}).
Thus $O_2(C_H(t)) = O_2(C_G(t))= F^\ast(C_G(t))$ is extraspecial of order $2^5$ and plus-type.
Therefore we have $C_G(t)/O_2(C_G(t))$ is isomorphic to a subgroup of $\mathrm O^+_4(2)$. This then
forces $C_G(t) = C_H(t)$, which is impossible. Hence $p\neq 3$.

So assume that $p \ge 5$. Then, by Lemma~\ref{O5facts} (iv), $O_{p'}(C_G(t)) =\langle t\rangle$ and
$C_H(t)/\langle t\rangle$ is not soluble. The  structure of $F^*(C_H(t))/\langle t \rangle$
together with Proposition~\ref{SE-p2} implies that $p=5$ and that $F^*(C_G(t))/O_{p'}(G_G(t)) \cong
\Alt(10)$. Since $F^*(C_H(t)) \ge \langle t \rangle$, it follows that $F^*(C_G(t)) \cong 2\udot
\Alt(10)$. Now from the structure of $2\udot\Alt(10)$, we deduce that $X=F^*(C_H(t)) \cong
\SL_2(5)*\SL_2(5)$ and $N_{F^*(C_G(t))}(X)/X$ is cyclic of order $4$. This is unfortunately
incompatible with the structure of $N_G(X)$ in $H$ where we have either $N_H(X)/X$ is cyclic of
order $2$ or is a fours group. This contradiction shows that $H$ cannot be strongly $p$-embedded in
$G$ and completes the proof of Theorem~\ref{thmone}.
\end{proof}

As mentioned in the introduction, Theorem~\ref{thma} follows by combining \cite[Proposition 9.1,
Theorem 1.2]{ParkerStroth} with Theorem~\ref{thmone}.


\begin{thebibliography}{99}

\bibitem{Aschbacher}
Michael Aschbacher, Finite group theory. Cambridge Studies in Advanced Mathematics, 10. Cambridge
University Press, Cambridge, 1986.
\bibitem{Bender} H. Bender, Transitive Gruppen gerader Ordnung, in denen jede Involution genau einen Punkt fest l\"a{\ss}t, J. Algebra 17 (1971), 527 - 554.
\bibitem{Gorenstein} Daniel Gorenstein,  Finite groups. Harper \& Row, Publishers, New York-London 1968.
\bibitem{GLS2}Daniel  Gorenstein, Richard   Lyons and  Ronald
Solomon, The classification of the finite simple groups. Number 2. Part I.
 Chapter G.
General group theory. Mathematical Surveys and Monographs, 40.2. American Mathematical Society,
Providence, RI,  1996.

\bibitem{GLS3}Daniel  Gorenstein, Richard   Lyons and  Ronald
Solomon, The classification of the finite simple groups. Number 3. Part I.
 Chapter A.
Almost simple $K$-groups. Mathematical Surveys and Monographs, 40.3. American Mathematical Society,
Providence, RI,  1998.

\bibitem{huppert} B.  Huppert, Endliche Gruppen. I. Die Grundlehren der Mathematischen Wissenschaften,
Band 134 Springer-Verlag, Berlin-New York  1967 xii+793 pp.

\bibitem{MSS}Ulrich Meierfrankenfeld, Ulrich, Bernd Stellmacher and Gernot Stroth, Finite groups of local characteristic $p$: an overview.  Groups, combinatorics \& geometry (Durham, 2001),  155--192, World Sci. Publishing, River Edge, NJ, 2003.
\bibitem{WBN}  Christopher Parker and Peter Rowley,  Local characteristic $p$ completions of weak $BN$-pairs.
 Proc. London Math. Soc. (3)  93  (2006),  no. 2, 325--394.

\bibitem{ParkerStroth} Chris Parker and Gernot Stroth,  Strongly $p$-embedded subgroups, http://arxiv.org/abs/0901.0805, 2009.
\bibitem{Suzuki} M. Suzuki, On a class of doubly transitive groups II, Ann. Math. 79, 1964, 514 - 589.
\end{thebibliography}
\end{document}